\documentclass[reqno,12pt]{article}
\usepackage{amsfonts,amssymb,amsmath,amsthm,epsfig,mathrsfs}
\usepackage{xcolor}

 \setlength{\voffset}{1 cm} \setlength{\oddsidemargin}{0.5cm}
 \setlength{\textwidth}{16.0cm} \setlength{\textheight}{20.5cm}

\newcommand{\R}{\mathbb{R}}

\newcommand{\res}{\mathop{\hbox{\vrule height 7pt width .5pt depth 0pt
\vrule height .5pt width 6pt depth 0pt}}\nolimits}

\newcommand{\Haus}[1]{{\mathscr S}^{#1}} 
\newcommand{\Leb}[1]{{\mathscr L}^{#1}} 


\newcommand{\eps}{\varepsilon}




\newtheorem{theorem}{Theorem}[section]
\newtheorem{lemma}[theorem]{Lemma}
\newtheorem{definition}[theorem]{Definition}
\newtheorem{remark}[theorem]{Remark}
\newtheorem{proposition}[theorem]{Proposition}

\newtheorem{ex}[theorem]{Example}

\title{On sets of finite perimeter in Wiener spaces:\\
reduced boundary and convergence to halfspaces}
\author{Luigi Ambrosio\footnote{Scuola Normale Superiore,
p.za dei Cavalieri 7, I-56126 Pisa, Italy.  e--mail:
l.ambrosio@sns.it}, Alessio Figalli\footnote{The University of Texas
at Austin, Department of Mathematics, RLM 8.100,
2515 Speedway Stop C1200,
Austin TX 78712, USA. e--mail: figalli@math.utexas.edu}, Eris
Runa\footnote{Hausdorff Center for Mathematics, Universit\"at Bonn,
Germany. e--mail: eris.runa@hcm.uni-bonn.de}}

\usepackage{esint}

\begin{document}

\maketitle

\begin{abstract}
We study sets of finite perimeter in Wiener space, and prove that at
almost every point (with respect to the perimeter measure) a set of finite
perimeter blows-up to a halfspace.
\end{abstract}

\section{Introduction}

The theory of sets of finite perimeter and $BV$ functions in Wiener
spaces, i.e., Banach spaces endowed with a Gaussian Borel
probability measure $\gamma$, was initiated by Fukushima and
Hino in \cite{fuk99,fuk2000_1,fuk2000_2}, and has been further
investigated in \cite{hin09set,AMMP,AMP,ambfig10}.

The basic question one would like to consider is the research of
infinite-dimensional analogues of the classical fine properties of
$BV$ functions and sets of finite perimeter in finite-dimensional
spaces. The class of sets of finite Gaussian perimeter $E$ in a Gaussian Banach space
$(X,\gamma)$ is defined by the integration by parts formula
$$
\int_E\partial_h\phi\,d\gamma=-\int_X \phi\,d\langle D_\gamma\chi_E,h\rangle_H+\int_E\phi\hat{h}\,d\gamma
$$
for all $\phi\in C^1_b(X)$ and $h\in H$. Here $H$ is the Cameron-Martin space of $(X,\gamma)$ and $D_\gamma\chi_E$ is
a $H$-valued measure with finite total variation in $X$. 

When looking for the counterpart of De~Giorgi's and Federer's classical
results to infinite-dimensional spaces, it was noticed in \cite{ambfig10}
that the Ornstein-Uhlenbeck 
$$T_t\chi_E(x):=\int_X\chi_E(e^{-t}x+\sqrt{1-e^{-2t}}y)\,d\gamma(y)$$ 
can be used to rephrase the notion of density, the main result of that paper being 
\begin{equation}\label{eq:ambfig10}
\lim_{t\downarrow 0}\int_X\Bigl|T_t\chi_E-\frac{1}{2}\Bigr|\,d|D_\gamma\chi_E|=0.
\end{equation}
According to this formula, we might say that $|D_\gamma\chi_E|$ is concentrated
on the set of points of density $1/2$, where the latter set is not defined using volume ratio in balls (as in
the finite-dimensional theory), but rather the Ornstein-Uhlenbeck semigroup.

In this paper we improve \eqref{eq:ambfig10} as follows (we refer to
Section~\ref{sec:halfspaces} for the notation relative to halfspaces):

\begin{theorem} \label{main} Let $E$ be a set of finite perimeter in $(X,\gamma)$ and let $S(x)=S_{\nu_E(x)}$ be the halfspaces determined 
by $\nu_E(x)$. Then
\begin{equation}\label{eq:main}
\lim_{t\downarrow 0}\int_X\int_X\left|\chi_E(e^{-t}x+\sqrt{1-e^{-2t}}y)-\chi_{S(x)}(y)\right|\,d\gamma(y)\,d|D_\gamma\chi_E|(x)=0.
\end{equation}
\end{theorem}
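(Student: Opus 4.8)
The plan is to upgrade the known weak statement \eqref{eq:ambfig10}, which says that the density converges to $1/2$ on average, to the strong statement \eqref{eq:main}, which says the blow-up is genuinely a halfspace. The natural strategy is to reduce the infinite-dimensional problem to the finite-dimensional De~Giorgi blow-up theorem by projecting onto finite-dimensional subspaces, and then pass to the limit using the structure of the perimeter measure. Concretely, I would first fix an orthonormal basis $\{h_j\}$ of the Cameron--Martin space $H$ and let $\pi_m$ denote the associated finite-dimensional projections (factoring $X$ through the $m$-dimensional Gaussian). For the projected (cylindrical) sets $E_m$ one has the classical Euclidean theory available: at $|D\chi_{E_m}|$-a.e.\ point the set blows up to the halfspace orthogonal to the measure-theoretic normal, and the Ornstein--Uhlenbeck semigroup in finite dimensions encodes exactly this blow-up.

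\medskip

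The key steps, in order, are as follows. First I would record the finite-dimensional version of \eqref{eq:main}: for a set $F$ of finite perimeter in $(\R^m,\gamma_m)$, De~Giorgi's theorem gives that at $|D_{\gamma_m}\chi_F|$-a.e.\ point $x$ the rescalings converge to the halfspace $S_{\nu_F(x)}$, and by dominated convergence this yields \eqref{eq:main} in $\R^m$. Second, I would use the fact that finite-perimeter sets in Wiener space are approximated, in an appropriate sense, by their conditional expectations / cylindrical approximations $E_m=\pi_m^{-1}(\hat E_m)$, together with the convergence of the perimeter measures $|D_\gamma\chi_{E_m}|\to|D_\gamma\chi_E|$ and of the normals $\nu_{E_m}\to\nu_E$ in the appropriate $L^2(|D_\gamma\chi_E|)$ sense. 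The essential analytic input is that the Ornstein--Uhlenbeck action
\[
T_t\chi_E(x)=\int_X\chi_E\bigl(e^{-t}x+\sqrt{1-e^{-2t}}\,y\bigr)\,\d\gamma(y)
\]
commutes nicely with these projections, so that the finite-dimensional blow-up information can be transported to $X$. Third, I would combine these two facts: bound the integrand in \eqref{eq:main} by the corresponding finite-dimensional quantity for $E_m$ plus error terms controlling the difference between $\chi_E$ and $\chi_{E_m}$ and between the halfspaces $S_{\nu_E}$ and $S_{\nu_{E_m}}$, and then let first $t\downarrow0$ (using the finite-dimensional result) and then $m\to\infty$.

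\medskip

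The hard part will be the interchange of the limits in $t$ and $m$: the finite-dimensional blow-up result is only available after projecting, but \eqref{eq:main} requires a limit in $t$ for the full set $E$, and a priori the rate at which $T_t\chi_E$ resolves the blow-up may degrade as the dimension grows. I expect the crux to be establishing a uniform-in-$m$ control, most plausibly through a monotonicity or lower-semicontinuity property of the perimeter under projection (so that no mass is lost in the limit) together with the fact that the normals converge not merely weakly but strongly in $L^2(|D_\gamma\chi_E|;H)$. If such strong convergence of $\nu_{E_m}$ holds — which is precisely what guarantees that $S_{\nu_{E_m}(x)}$ approximates $S_{\nu_E(x)}$ for a.e.\ $x$ — then the two error terms vanish as $m\to\infty$ uniformly in $t$, and the double limit can be taken in either order, closing the argument.
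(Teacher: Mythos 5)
Your overall strategy (reduce to the finite-dimensional De~Giorgi blow-up, then pass to the limit along finite-dimensional subspaces) is the right one, and your finite-dimensional step is correct. But the reduction you propose --- replacing $E$ by cylindrical approximations $E_m=\pi_m^{-1}(\hat E_m)$ and paying an error term ``controlling the difference between $\chi_E$ and $\chi_{E_m}$'' --- breaks down exactly at the step you flag as hard, and the fix you suggest (strong convergence of normals plus lower semicontinuity of the perimeter) does not repair it. The problematic term is
\begin{equation*}
\int_X T_t\bigl(|\chi_E-\chi_{E_m}|\bigr)(x)\,d|D_\gamma\chi_E|(x),
\end{equation*}
in which a function that is merely small in $L^1(\gamma)$ is propagated by $T_t$ and then integrated against a \emph{singular} measure. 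The only available estimate of this type, Lemma~\ref{lem:3.4} (which captures the correct order), bounds it by $t^{-1/2}\|\chi_E-\chi_{E_m}\|_{L^1(\gamma)}$, and this blows up as $t\downarrow 0$ for each fixed $m$; since your scheme takes $t\downarrow 0$ first, the term is simply not under control. Strong convergence of $\nu_{E_m}$ handles only the halfspace error $|\chi_{S_{\nu_{E_m}}}-\chi_{S_{\nu_E}}|$, which is integrated against $\gamma$ in the inner variable; it says nothing about the set error against the perimeter measure. There are also prior difficulties: conditional expectations of $\chi_E$ are not characteristic functions, so cylindrical \emph{sets} $E_m$ with $|D_\gamma\chi_{E_m}|\to|D_\gamma\chi_E|$ and $\nu_{E_m}\to\nu_E$ are not available off the shelf, and comparing integrals of $t$-dependent integrands against $|D_\gamma\chi_{E_m}|$ versus $|D_\gamma\chi_E|$ is yet another uncontrolled step.

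The paper's proof avoids cylindrical approximation altogether: it never compares $E$ with another set, but slices $E$ into its finite-dimensional sections $E_{x_2}\subset F_n$, using the disintegration \eqref{hino440}, which says that $\sigma_n=|\pi_{F_n}(\nu_E)|\,|D_\gamma\chi_E|$ is exactly the superposition of the perimeter measures of the sections. De~Giorgi's theorem is applied to the sections, whose reduced-boundary normals $\nu_n$ are shown to equal $\pi_{F_n}(\nu_E)/|\pi_{F_n}(\nu_E)|$ and hence to converge to $\nu_E$, which handles the halfspace error as in \eqref{eq:error1}. The remaining error \eqref{eq:errorefinale} compares $\chi_E(e^{-t}x+\sqrt{1-e^{-2t}}x')$ with $\chi_{E_{x_2}}(e^{-t}x_1+\sqrt{1-e^{-2t}}x_1')$: this difference is itself $t$-dependent, measuring only the oscillation of $E$ in the $Y_n$-directions under the OU flow. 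The fiber Poincar\'e inequality \eqref{poincarestr} shows this oscillation is of order $\sqrt{t}\,|D_{\gamma^\perp_n}\chi_{E_{x_1}}|(Y_n)$, and the factor $\sqrt{t}$ exactly cancels the $t^{-1/2}$ loss from Lemma~\ref{lem:3.4}, leaving the bound $c\int_X|\pi_{F_n}^\perp(\nu_E)|\,d|D_\gamma\chi_E|$, which tends to zero as $n\to\infty$. This cancellation is the missing mechanism in your proposal: without an error term that is itself $O(\sqrt{t})$, no monotonicity or strong-convergence statement about normals can overcome the $t^{-1/2}$ cost of integrating $T_t$ of a fixed $L^1$ function against the perimeter measure.
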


A nice interpretation of this result can be obtained stating it in terms of the Gaussian rescaled sets
$$
E_{x,t}=\frac{E-e^{-t}x}{\sqrt{1-e^{-2t}}},
$$
namely
\begin{equation}\label{eq:main1}
\lim_{t\downarrow 0}\int_X\|\chi_{E_{x,t}}-\chi_{S(x)}\|_{L^1(\gamma)}\,d|D_\gamma\chi_E|(x)=0.
\end{equation}
Clearly, if we pull the modulus out of the integral in \eqref{eq:main} we recover \eqref{eq:ambfig10}, because the measure
of halfspaces is $1/2$ and $T_t\chi_E(x)=\gamma(E_{x,t})$. More specifically, \eqref{eq:main1} formalizes the fact, established by De~Giorgi
in finite dimensions, that on small scales a set of finite perimeter is close to an halfspace
  at almost every (w.r.t. surface measure).

The proof of \eqref{eq:main1} relies mainly on a combination of the careful finite-dimensional estimates
of \cite{ambfig10} with a variant
of the cylindrical construction performed in \cite{hin09set} (with
respect to \cite{hin09set}, here we use of the reduced boundary instead
of the essential boundary of the
finite-dimensional sections of $E$).

\section{Preliminary results}

We assume that $(X,\|\cdot\|)$ is a separable Banach space and
$\gamma$ is a Gaussian probability measure on the Borel
$\sigma$-algebra of $X$. We shall always assume that $\gamma$ is
nondegenerate (i.e., all closed proper subspaces of $X$ are
$\gamma$-negligible) and centered (i.e., $\int_X x\,d\gamma=0$). We
denote by $H$ the Cameron-Martin subspace of $X$, that is
$$
H:=\left\{\int_X f(x)x\,d\gamma(x):f\in L^2(X,\gamma)\right\},
$$
and, for $h \in H$, we denote by $\hat h \in L^2(X,\gamma)$ the Fomin
derivative of $\gamma$ along $h$, namely
\begin{equation}\label{Fomin}
\int_X\partial_h\phi\,d\gamma=-\int_X\hat{h}\phi\,d\gamma
\end{equation}
for all $\phi\in C^1_b(X)$. Here and in the sequel $C^1_b(X)$
denotes the space of continuously differentiable cylindrical
functions in $X$, bounded and with a bounded gradient. The space $H$
can be endowed with a Hilbertian norm $|\cdot |_H$ that makes the
map $h\mapsto\hat{h}$ an isometry; furthermore, the injection of
$(H,|\cdot |_H)$ into $(X,\|\cdot\|)$ is compact.

We shall denote by $\tilde{H}\subset H$ the subset of vectors of the form
\begin{equation}\label{defhstar}
\int_X \langle x^*,x\rangle x\,d\gamma(x),\qquad x^*\in X^*.
\end{equation}
This is a dense (even w.r.t. to the Hilbertian norm) subspace of
$H$. Furthermore, for $h\in \tilde H$ the function $\hat{h}(x)$ is
precisely $\langle x^*,x\rangle$ (and so, it is continuous).

Given an $m$-dimensional subspace $F\subset \tilde{H}$ we shall frequently
consider an orthonormal basis $\{h_1,\ldots,h_m\}$ of $F$ and the
factorization $X=F\oplus Y$, where $Y$ is the kernel of the
continuous linear map
\begin{equation}\label{ammiss1}
x\in X\mapsto \Pi_F(x):=\sum_{i=1}^m\hat{h}_i(x)h_i\in F.
\end{equation}
The decomposition $x=\Pi_F(x)+(x-\Pi_F(x))$ is well defined, thanks
to the fact that $\Pi_F\circ \Pi_F=\Pi_F$ and so $x-\Pi_F(x)\in Y$;
in turn this follows by
$\hat{h}_i(h_j)=\langle\hat{h}_i,\hat{h}_j\rangle_{L^2}=\delta_{ij}$.

Thanks to the fact that $|h_i|_H=1$, this induces a factorization
$\gamma=\gamma_F\otimes\gamma_Y$, with $\gamma_F$ the standard Gaussian
in $F$ (endowed with the metric inherited from $H$) and $\gamma_Y$
Gaussian in $(Y,\|\cdot\|)$. Furthermore, the orthogonal complement
$F^\perp$ of $F$ in $H$ is the Cameron-Martin space of
$(Y,\gamma_Y)$.

\subsection{$BV$ functions and Sobolev spaces}

Here we present the definitions of Sobolev and $BV$ spaces. Since we
will consider bounded functions only, we shall restrict to this
class for ease of exposition.

Let $u:X\to\R$ be a bounded Borel function. Motivated by
\eqref{Fomin}, we say that $u\in W^{1,1}(X,\gamma)$ if there exists
a (unique) $H$-valued function, denoted by $\nabla u$, such that $|\nabla
u|_H\in L^1(X,\gamma)$ and
$$
\int_X u\partial_h\phi\,d\gamma=-\int_X \phi\langle\nabla
u,h\rangle_H\,d\gamma+\int_X u\phi\hat{h}\,d\gamma
$$
for all $\phi\in C^1_b(X)$ and $h\in H$.

Analogously, following \cite{fuk2000_1,fuk2000_2}, we say that $u\in
BV(X,\gamma)$ if there exists a (unique) $H$-valued Borel measure
$D_\gamma u$ with finite total variation in $X$ satisfying
$$
\int_X u\partial_h\phi\,d\gamma=-\int_X \phi\,d\langle D_\gamma
u,h\rangle_H+\int_X u\phi\hat{h}\,d\gamma
$$
for all $\phi\in C^1_b(X)$ and $h\in H$.

In the sequel we will mostly consider
the case when $u=\chi_E:X\to\{0,1\}$ is the characteristic function
of a set $E$, although some statements are more natural in the
general $BV$ context. Notice the inclusion $W^{1,1}(X,\gamma)\subset
BV(X,\gamma)$, given by the identity $D_\gamma u=\nabla u\,\gamma$.

\subsection{The OU semigroup and Mehler's formula}

In this paper, the Ornstein-Uhlenbeck semigroup $T_tf$ will always
be understood as defined by the \emph{pointwise} formula
\begin{equation}\label{mehler}
T_tf(x):=\int_X f(e^{-t}x+\sqrt{1-e^{-2t}}y)\,d\gamma(y)
\end{equation}
which makes sense whenever $f$ is bounded and Borel. This convention
will be important when integrating $T_t f$ against potentially
singular measures. 

We shall also use the dual OU semigroup $T_t^*$, mapping signed
measures into signed measures, defined by the formula
\begin{equation}
\langle T_t^*\mu,\phi\rangle:=\int_X T_t\phi\,d\mu\qquad\text{$\phi$
bounded Borel.}
\end{equation}

In the next proposition we collect a few properties of the OU
semigroup needed in the sequel (see for instance \cite{boga} for the
Sobolev case, and \cite{AMP} for the $BV$ case).

\begin{proposition}\label{pammiss1} Let $u:X\to\R$ be bounded and Borel, and $t>0$.
Then $T_tu\in W^{1,1}(X,\gamma)$ and:
\begin{itemize}
\item[(a)] if $u\in W^{1,1}(X,\gamma)$ then, componentwise, it holds $\nabla T_tu=e^{-t}T_t\nabla u$;
\item[(b)] if $u\in BV(X,\gamma)$ then, componentwise, it holds $\nabla T_tu\, \gamma=e^{-t}T_t^*(D_\gamma
u)$.
\end{itemize}
\end{proposition}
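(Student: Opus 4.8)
Throughout set $\alpha:=e^{-t}$ and $\beta:=\sqrt{1-e^{-2t}}$, so that $\alpha^2+\beta^2=1$ and $T_tu(x)=\int_X u(\alpha x+\beta y)\,d\gamma(y)$ by \eqref{mehler}. The whole argument rests on a single mechanism: in $u(\alpha x+\beta y)$ the two variables enter only through $\alpha x+\beta y$, so a derivative in $x$ along $h\in H$ equals $\alpha/\beta$ times a derivative in $y$ along $h$, and the latter can be unloaded onto the Gaussian weight through the Fomin formula \eqref{Fomin} applied in the $y$ variable. This makes $u$ free of derivatives and suggests that the weak gradient of $T_tu$ is represented by
\begin{equation}\label{planstar}
\langle\nabla T_tu(x),h\rangle_H=\frac{\alpha}{\beta}\int_X u(\alpha x+\beta y)\,\hat h(y)\,d\gamma(y),\qquad h\in H.
\end{equation}
The second tool, used repeatedly, is the rotation invariance of the product measure: the map $R(x,y):=(\alpha x+\beta y,\,-\beta x+\alpha y)$ preserves $\gamma\otimes\gamma$, with inverse $(z,w)\mapsto(\alpha z-\beta w,\,\beta z+\alpha w)$, and $\hat h$ transforms additively, $\hat h(\beta z+\alpha w)=\beta\hat h(z)+\alpha\hat h(w)$ with $\hat h(-w)=-\hat h(w)$.

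First I would prove $T_tu\in W^{1,1}(X,\gamma)$ for bounded Borel $u$. For fixed $x$ the right-hand side of \eqref{planstar} is linear in $h$ and, since $|u|\le\|u\|_\infty$ and $\|\hat h\|_{L^1(\gamma)}\le\|\hat h\|_{L^2(\gamma)}=|h|_H$, bounded by $(\alpha/\beta)\|u\|_\infty|h|_H$; hence it is represented by a vector $\nabla T_tu(x)\in H$ with
\begin{equation}\label{planbd}
|\nabla T_tu(x)|_H\le\frac{e^{-t}}{\sqrt{1-e^{-2t}}}\,\|u\|_\infty\qquad\text{for every }x\in X,
\end{equation}
so that $|\nabla T_tu|_H\in L^\infty(\gamma)\subset L^1(X,\gamma)$. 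To identify this vector with the weak gradient I would check the defining identity for $\phi\in C^1_b(X)$: applying $R$ to $\int_X T_tu\,\partial_h\phi\,d\gamma$ turns it into $\int_X u\,T_t(\partial_h\phi)\,d\gamma$, and writing $T_t(\partial_h\phi)(z)=\beta^{-1}\int_X\phi(\alpha z+\beta w)\hat h(w)\,d\gamma(w)$ through \eqref{Fomin} in $w$, a second application of $R$ together with the oddness and symmetry above reorganizes the double integral exactly into $-\int_X\phi\,\langle\nabla T_tu,h\rangle_H\,d\gamma+\int_X T_tu\,\phi\hat h\,d\gamma$, with $\langle\nabla T_tu,h\rangle_H$ as in \eqref{planstar}. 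The Fubini interchanges and the measurability of $x\mapsto\nabla T_tu(x)$ are routine once \eqref{planbd} is available.

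For part (a) I would avoid differentiating $u$ inside the $y$-integral: the affine change $y\mapsto\alpha x+\beta y$ pushes $\gamma$ forward to a Gaussian with covariance $\beta^2$ that in infinite dimensions is singular with respect to $\gamma$, so a naive ``integration by parts in $y$'' is not licit. Instead I would argue by approximation. For smooth cylindrical $u$ one differentiates Mehler's formula directly to get the classical commutation $\nabla T_tu=e^{-t}T_t\nabla u$. For general $u\in W^{1,1}(X,\gamma)$ pick smooth cylindrical $u_n$ with $u_n\to u$ and $\nabla u_n\to\nabla u$ in $L^1$; since $T_t$ preserves $\gamma$ it is an $L^1(\gamma)$-contraction, so $T_tu_n\to T_tu$ and $e^{-t}T_t\nabla u_n\to e^{-t}T_t\nabla u$ in $L^1$, and the closedness of the gradient operator yields $\nabla T_tu=e^{-t}T_t\nabla u$.

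For part (b) I would show that the $H$-valued measures $\nabla T_tu\,\gamma$ and $e^{-t}T_t^*(D_\gamma u)$ coincide by testing componentwise against $h\in H$ and $\phi\in C^1_b(X)$ (which is cylindrical-dense enough to determine a measure, and for which $T_t\phi\in C^1_b(X)$ is again an admissible test function). Inserting \eqref{planstar}, applying $R$, and integrating by parts in $w$ with \eqref{Fomin}---legitimate precisely because here $\phi$, and not $u$, carries the smoothness---reduces $\int_X\phi\,\langle\nabla T_tu,h\rangle_H\,d\gamma$ to $-e^{-2t}\int_X u\,T_t(\partial_h\phi)\,d\gamma+e^{-t}\int_X u\,(T_t\phi)\,\hat h\,d\gamma$. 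On the other side, the definition of $T_t^*$ gives $e^{-t}\int_X T_t\phi\,d\langle D_\gamma u,h\rangle_H$, and the $BV$ integration-by-parts formula with test function $T_t\phi$ combined with the smooth commutation $\partial_hT_t\phi=e^{-t}T_t\partial_h\phi$ (see \cite{boga}) produces the very same expression, which proves the claim. The genuinely delicate points are all in the first two paragraphs---making \eqref{planstar} rigorous for merely Borel $u$, i.e.\ the $H$-representation and measurability of $\nabla T_tu$, and justifying the additivity of $\hat h$ under $R$ for every $h\in H$ (clear on the dense subspace $\tilde H$, where $\hat h$ is a genuine continuous linear functional, and extended to all $h$ by the $L^2$-isometry $h\mapsto\hat h$)---whereas, once \eqref{planstar}, the bound \eqref{planbd}, and the invariance of $\gamma\otimes\gamma$ are secured, parts (a) and (b) are short.
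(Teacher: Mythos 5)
The paper offers no proof of Proposition~\ref{pammiss1}: it is stated as a collection of known facts, with \cite{boga} cited for the Sobolev case and \cite{AMP} for the $BV$ case, so your proposal can only be measured against the standard arguments behind those citations --- and it is, in substance, a correct and self-contained reconstruction of them. Your starting point, the Mehler-type derivative formula
$$
\langle\nabla T_tu(x),h\rangle_H=\frac{e^{-t}}{\sqrt{1-e^{-2t}}}\int_X u\bigl(e^{-t}x+\sqrt{1-e^{-2t}}\,y\bigr)\,\hat h(y)\,d\gamma(y),
$$
is exactly the formula used in \cite[Section~5.4]{boga} to prove the smoothing property of the OU semigroup. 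Your verification of the weak-gradient identity through the rotation $R(x,y)=(\alpha x+\beta y,-\beta x+\alpha y)$ checks out: after applying $R$ and the a.e.\ linearity of $\hat h$, the terms containing $\hat h(z)$ cancel, the coefficient of the remaining term in $\hat h(w)$ is $-(\alpha^2/\beta+\beta)=-1/\beta$, and the substitution $w\mapsto -w$ turns the right-hand side into $\int_X u\,T_t(\partial_h\phi)\,d\gamma$, i.e.\ into the left-hand side. The same bookkeeping makes part (b) come out right: both $\int_X\phi\,\langle\nabla T_tu,h\rangle_H\,d\gamma$ and $e^{-t}\int_X T_t\phi\,d\langle D_\gamma u,h\rangle_H$ reduce to $-e^{-2t}\int_X u\,T_t(\partial_h\phi)\,d\gamma+e^{-t}\int_X u\,(T_t\phi)\,\hat h\,d\gamma$, and you correctly note that $T_t\phi$ is again cylindrical and of class $C^1_b$ (hence admissible in the $BV$ integration-by-parts formula) and that smooth cylindrical functions determine finite measures.

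Two remarks. First, the paper's \eqref{Fomin} carries a sign inconsistent with the paper's own definitions of $W^{1,1}$ and $BV$ (take $u\equiv1$ in those definitions); you silently work with the consistent convention $\int_X\partial_h\phi\,d\gamma=\int_X\phi\,\hat h\,d\gamma$, which is the correct reading, but this is worth stating explicitly. Second, the one genuine soft spot is part (a): you invoke density of smooth cylindrical functions, in the natural $W^{1,1}$ norm, in the integration-by-parts--defined space $W^{1,1}(X,\gamma)$. For $p=1$ this is not a routine fact (one needs cylindrical conditional expectations, vector-valued martingale convergence, and finite-dimensional mollification; it is precisely the kind of statement the paper would cite \cite{boga} for), so as written (a) rests on an unproved lemma. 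It can be bypassed entirely: since $W^{1,1}(X,\gamma)\subset BV(X,\gamma)$ with $D_\gamma u=\nabla u\,\gamma$, part (b) gives $\nabla T_tu\,\gamma=e^{-t}T_t^*(\nabla u\,\gamma)$, and the self-adjointness of $T_t$ with respect to $\gamma$ (which your rotation argument already establishes) yields $T_t^*(g\gamma)=(T_tg)\gamma$ componentwise, whence (a). With that substitution your proof is complete.
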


The next result is basically contained in
\cite[Proposition~5.4.8]{boga}, see also
\cite[Proposition~2.2]{ambfig10} for a detailed proof.
We state it in order to emphasize that, $\gamma_Y$-a.e. $y \in Y$,
the regular version of the restriction of
$T_tf$ to $y+F$ (provided by the above proposition) is for
precisely the one pointwise defined in Mehler's
formula.

\begin{proposition} \label{pbogaregu} Let $u$ be a bounded Borel function and $t>0$.
With the above notation, for $\gamma_Y$-a.e. $y\in Y$ the map
$z\mapsto T_t u(z,y)$ is smooth in $F$.
\end{proposition}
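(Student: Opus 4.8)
The plan is to exploit the smoothing effect of the Gaussian kernel in the $F$-directions, which becomes explicit once Mehler's formula is combined with the product decomposition $\gamma=\gamma_F\otimes\gamma_Y$. Writing the integration variable as $w=w_F+w_Y\in F\oplus Y$ and the argument as $x=z+y$ with $z\in F$, $y\in Y$, formula \eqref{mehler} gives
\begin{equation*}
T_t u(z+y)=\int_Y\int_F u\bigl(e^{-t}z+\sqrt{1-e^{-2t}}\,w_F+e^{-t}y+\sqrt{1-e^{-2t}}\,w_Y\bigr)\,d\gamma_F(w_F)\,d\gamma_Y(w_Y),
\end{equation*}
where I have used that $e^{-t}z+\sqrt{1-e^{-2t}}\,w_F\in F$ while $e^{-t}y+\sqrt{1-e^{-2t}}\,w_Y\in Y$.

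First I would perform, for fixed $y$ and $w_Y$, the linear change of variables $\xi=e^{-t}z+\sqrt{1-e^{-2t}}\,w_F$ in the inner integral over $F\cong\R^m$. Since $\gamma_F$ is the standard Gaussian on $F$, this turns the inner integral into a convolution against the smooth kernel
\begin{equation*}
G(\xi):=\bigl(2\pi(1-e^{-2t})\bigr)^{-m/2}\exp\Bigl(-\frac{|\xi|^2}{2(1-e^{-2t})}\Bigr),
\end{equation*}
so that
\begin{equation*}
T_t u(z+y)=\int_Y\int_F u\bigl(\xi+e^{-t}y+\sqrt{1-e^{-2t}}\,w_Y\bigr)\,G\bigl(\xi-e^{-t}z\bigr)\,d\xi\,d\gamma_Y(w_Y).
\end{equation*}
The decisive point is that the whole dependence on $z$ has now been transferred onto the factor $G(\xi-e^{-t}z)$, which is $C^\infty$ in $z$.

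The key step is then to differentiate under the integral signs. Because $u$ is merely bounded and Borel it cannot be differentiated directly; instead every $z$-derivative falls on $G$, and for each multi-index $\alpha$ one has $\partial_z^\alpha G(\xi-e^{-t}z)=(-e^{-t})^{|\alpha|}(\partial^\alpha G)(\xi-e^{-t}z)$, where $\partial^\alpha G$ is a polynomial times a Gaussian, hence bounded in $L^1(F,\,d\xi)$ uniformly for $z$ in compact sets. Together with $|u|\le\sup|u|$ this yields a domination that is uniform in $z$ (locally) and, crucially, uniform over $Y$; since $\gamma_Y$ is a probability measure the same constant survives the integration in $w_Y$. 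Differentiation under the integral sign then shows that $z\mapsto T_t u(z+y)$ is of class $C^\infty$ on $F$, with all derivatives bounded by constants depending only on $t$, $m$, $\alpha$ and $\sup|u|$.

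The main obstacle is of a more conceptual nature. The computation above in fact produces a pointwise smooth function for \emph{every} $y$, whereas the role of the statement is to match this explicit version with the good representative of the $W^{1,1}(X,\gamma)$ function $T_t u$ furnished by Proposition~\ref{pammiss1}. What must be checked is that the pointwise Mehler version coincides $\gamma_Y$-almost everywhere with the Sobolev restriction to the fibers $y+F$, obtained from the Fubini-type theory of Sobolev functions on the product $F\oplus Y$; this identification is intrinsically an almost-everywhere statement in $y$, which is the source of the qualification ``$\gamma_Y$-a.e.'' I would close the argument by comparing the two versions through their common weak derivatives along $F$ and invoking that a $W^{1,1}_{\mathrm{loc}}(F)$ function possessing a continuous representative is determined by it.
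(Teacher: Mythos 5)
Your proof is correct and takes essentially the same route as the paper's, which gives no argument of its own but defers to \cite[Proposition~5.4.8]{boga} and \cite[Proposition~2.2]{ambfig10}: those proofs rest precisely on your computation, namely the factorization $\gamma=\gamma_F\otimes\gamma_Y$ in Mehler's formula, which turns the $F$-part of the integral into convolution with a Gaussian kernel of variance $1-e^{-2t}$, followed by differentiation under the integral sign. In fact your argument yields smoothness for \emph{every} $y\in Y$, and the identification with the fiberwise Sobolev representative that you flag as the remaining ``conceptual obstacle'' is immediate from Fubini's theorem (any two representatives of $T_tu$ differ on a $\gamma$-null set, whose slices are $\gamma_F$-null for $\gamma_Y$-a.e.\ $y$), so no further machinery is needed there.
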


The next lemma provides a rate of convergence of $T_t u$ to $u$ when
$u$ belongs to $BV(X,\gamma)$; the proof follows the lines of the
proof of Poincar\'e inequalities, see \cite[Lemma~2.3]{ambfig10},
\cite[Theorem~5.5.11]{boga}.

\begin{lemma}\label{lpoincare} Let $u\in BV(X,\gamma)$. Then
\begin{equation}\label{poincarestr}
\int_X\int_X
|u(x)-u(e^{-t}x+\sqrt{1-e^{-2t}}y)|\,d\gamma(x)d\gamma(y)\leq c_t
|D_\gamma u|(X)
\end{equation}
with
$c_t:=\sqrt{\frac{2}{\pi}}\int_0^t\frac{e^{-s}}{\sqrt{1-e^{-2s}}}\,ds$,
$c_t\sim 2\sqrt{t/\pi}$ as $t\downarrow 0$.
In particular
$$
\int_X|T_tu-u|\,d\gamma\leq c_t|D_\gamma u|(X).
$$
\end{lemma}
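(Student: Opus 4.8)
The plan is to write the inner increment as the integral of a derivative along the Ornstein--Uhlenbeck rotation flow, and then to exploit the rotation invariance of the product measure $\gamma\otimes\gamma$. First I would reduce to the case where $u$ is a smooth cylindrical function, for which the gradient $\nabla u$ takes values in a fixed finite-dimensional subspace of $\tilde H$. To recover the general case $u\in BV(X,\gamma)$ from this, I would apply the inequality to the regularisations $u_\eps:=T_\eps u$, which by Proposition~\ref{pammiss1} belong to $W^{1,1}(X,\gamma)$ and, by part (b), satisfy $\int_X|\nabla u_\eps|_H\,d\gamma=e^{-\eps}|T_\eps^*(D_\gamma u)|(X)\le|D_\gamma u|(X)$, since $T_\eps^*$ is a contraction on (vector) measures in total variation. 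As $u_\eps\to u$ in $L^1(X,\gamma)$ and the left-hand side of \eqref{poincarestr} is continuous under $L^1$-convergence (both $(x,y)\mapsto u(x)$ and $(x,y)\mapsto u(e^{-s}x+\sqrt{1-e^{-2s}}y)$ push $\gamma\otimes\gamma$ onto $\gamma$), letting $\eps\downarrow 0$ transfers the estimate to $u$.

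For fixed smooth cylindrical $u$, I set $w_s:=e^{-s}x+\sqrt{1-e^{-2s}}\,y$ and $\varphi(s):=u(w_s)$, so that $u(e^{-t}x+\sqrt{1-e^{-2t}}y)-u(x)=\int_0^t\varphi'(s)\,ds$. Introducing the rotation $x_s'=e^{-s}x+\sqrt{1-e^{-2s}}\,y$, $y_s'=-\sqrt{1-e^{-2s}}\,x+e^{-s}y$, a direct computation gives $\dot w_s=\frac{e^{-s}}{\sqrt{1-e^{-2s}}}\,y_s'$, whence $\varphi'(s)=\frac{e^{-s}}{\sqrt{1-e^{-2s}}}\langle\nabla u(x_s'),y_s'\rangle_H$; here the pairing is meaningful because $\nabla u$ is finite-dimensional, so it reduces to the finite sum $\sum_i\partial_{h_i}u(x_s')\,\hat h_i(y_s')$. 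Taking absolute values, integrating in $(x,y)$ against $\gamma\otimes\gamma$, and using Tonelli, I would bound the left side of \eqref{poincarestr} by $\int_0^t\frac{e^{-s}}{\sqrt{1-e^{-2s}}}\int_X\int_X|\langle\nabla u(x_s'),y_s'\rangle_H|\,d\gamma(x)\,d\gamma(y)\,ds$.

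The key step is to evaluate the inner double integral. Since for each $s$ the rotation $(x,y)\mapsto(x_s',y_s')$ preserves $\gamma\otimes\gamma$, it equals $\int_X\int_X|\langle\nabla u(x'),y'\rangle_H|\,d\gamma(y')\,d\gamma(x')$. For fixed $x'$ the function $y'\mapsto\langle\nabla u(x'),y'\rangle_H=\widehat{\nabla u(x')}(y')$ is a centered Gaussian with variance $|\nabla u(x')|_H^2$, so its $L^1(\gamma)$-norm is $\sqrt{2/\pi}\,|\nabla u(x')|_H$; integrating in $x'$ yields $\sqrt{2/\pi}\int_X|\nabla u|_H\,d\gamma=\sqrt{2/\pi}\,|D_\gamma u|(X)$. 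Substituting back produces \eqref{poincarestr} with the stated constant $c_t$, and the elementary expansion $1-e^{-2s}\sim 2s$ gives $c_t\sim 2\sqrt{t/\pi}$ as $t\downarrow 0$. The \emph{in particular} statement then follows from Fubini and Jensen's inequality, writing $T_tu(x)-u(x)=\int_X(u(w_t)-u(x))\,d\gamma(y)$ and bounding $|T_tu-u|\le\int_X|u(w_t)-u(x)|\,d\gamma(y)$.

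I expect the main obstacle to be the rigorous justification of the pointwise derivative $\varphi'(s)$ and of the pairing $\langle\nabla u(x_s'),y_s'\rangle_H$, since $y_s'\in X$ need not belong to $H$; this is exactly why I carry out the reduction to cylindrical functions (where the pairing is a genuine finite sum of the Gaussian variables $\hat h_i$) before performing the rotation argument, recovering the general $BV$ case through the smoothing-and-contraction scheme described in the first paragraph.
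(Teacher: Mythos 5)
Your core argument --- the rotation trick --- is exactly the one behind the proof that the paper points to (the paper does not prove this lemma itself, but defers to \cite[Lemma~2.3]{ambfig10} and \cite[Theorem~5.5.11]{boga}, whose proofs are precisely this computation): differentiate along the Ornstein--Uhlenbeck interpolation $w_s=e^{-s}x+\sqrt{1-e^{-2s}}\,y$, use the invariance of $\gamma\otimes\gamma$ under the rotation $(x,y)\mapsto(x_s',y_s')$, and compute the first absolute moment of the centered Gaussian variable $y'\mapsto\langle\nabla u(x'),y'\rangle_H$, which produces the factor $\sqrt{2/\pi}$ and hence the constant $c_t$; the ``in particular'' statement via Jensen is also correct. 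So the heart of your proposal is sound and agrees with the intended proof.

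There is, however, a genuine gap in your approximation chain. You establish \eqref{poincarestr} only for smooth cylindrical functions, and then you apply it to $u_\eps=T_\eps u$. But $T_\eps u$ is \emph{not} cylindrical: Proposition~\ref{pammiss1} only places it in $W^{1,1}(X,\gamma)$, and your pointwise differentiation of $s\mapsto u_\eps(w_s)$ is not justified for such a function, because the increments $w_{s'}-w_s$ are multiples of $x$ and of $y$, which in general do not belong to $H$; membership in $W^{1,1}$ gives differentiability only along Cameron--Martin directions. Hence the step ``smooth cylindrical $\Rightarrow$ general $W^{1,1}$'' is missing, and without it your smoothing-and-contraction scheme cannot start. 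Two standard ways to close it: (i) run your $T_\eps$-scheme first in a \emph{finite-dimensional} Gaussian space, where it does close (there $T_\eps u$ is genuinely smooth by Mehler's formula and every function is cylindrical), and then lift to $(X,\gamma)$ by conditional expectations $E_n u$ onto finite-dimensional subspaces $F_n\subset\tilde H$ with dense union --- these are cylindrical, converge to $u$ in $L^1(X,\gamma)$ by martingale convergence, and satisfy $|D_{\gamma_n}E_n u|(F_n)\le|D_\gamma u|(X)$ since conditional expectation contracts the total variation; or (ii) invoke the approximation theorem for $BV(X,\gamma)$ (see \cite{AMMP}), which provides smooth cylindrical $u_n\to u$ in $L^1(X,\gamma)$ with $\int_X|\nabla u_n|_H\,d\gamma\to|D_\gamma u|(X)$, making the $T_\eps$ regularisation and the contraction property of $T_\eps^*$ unnecessary. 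With either fix your argument becomes complete.
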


Let us now recall the fundamental facts about sets of locally finite perimeter $E$ in $\R^m$. De~Giorgi called
\emph{reduced} boundary of $E$ the set $\mathcal F E$ of points in the support of $|D\chi_E|$ satisfying
$$
\exists\,\,\nu_E(x):=\lim_{r\downarrow 0}\frac{D\chi_E(B_r(x))}{|D\chi_E|(B_r(x))}
\quad\text{and}\quad |\nu_E(x)|=1.
$$
By Besicovitch theorem, $|D\chi_E|$ is concentrated on $\mathcal F E$ and $D\chi_E=\nu_E|D\chi_E|$. 
The main result of \cite{DeG1} are: first, the blown-up sets
\begin{equation}\label{blowneu}
\frac{E-x}{r}
\end{equation}
converge as $r\downarrow 0$ locally in measure, and therefore in $L^1(G_m\Leb{m})$, to the halfspace $S_{\nu_E(x)}$ having $\nu_E$ as inner normal;
second, this information can be used to show that $\mathcal F E$ is \emph{countably $\Haus{m-1}$-rectifiable}, namely there exist
countably many $C^1$ hypersurfaces $\Gamma_i\subset\R^m$ such that 
$$
\Haus{m}\biggl(\mathcal F E\setminus\bigcup_i\Gamma_i\biggr)=0.
$$

In the following results we assume that $(X,\gamma)$ is an $m$-dimensional Gaussian space; if we endow
$X$ with the Cameron-Martin distance $d$, then $(X,\gamma,d)$ is isomorphic to $(\R^m,G_m\Leb{m},\|\cdot\|)$,
$\|\cdot\|$ being the euclidean distance. Under this isomorphism, we have $D_\gamma\chi_E=G_mD\chi_E$ whenever $E$
has finite Gaussian perimeter, so that $|D\chi_E|$ is finite on bounded sets and $E$ has locally finite Euclidean perimeter. 
Since this isomorphism is canonical, we can and shall use it to define $\mathcal F E$ also for sets with finite perimeter 
in $(X,\gamma)$ (although a more intrinsic definition along the lines of the appendix of \cite{ambfig10} could be given).

Having in mind the Ornstein-Uhlenbeck semigroup, the scaling \eqref{blowneu} now becomes
\begin{equation}\label{blowngau}
E_{x,t}:=\frac{E-e^{-t}x}{\sqrt{1-e^{-2t}}},
\end{equation}
so that
$$
T_t\chi_E(x)=\gamma(E_{x,t}).
$$
It corresponds to the scaling \eqref{blowneu} with $r=\sqrt{1-e^{-2t}}\sim\sqrt{2t}$ and with eccentric balls,
whose eccentricity equals $x(e^{-t}-1)$. Since $e^{-t}-1=O(t)=o(r)$, this eccentricity has not effect in the limit
and allows to rewrite, arguing as in \cite[Proposition~3.1]{ambfig10}, the Euclidean statement in Gaussian terms:

\begin{proposition}
Let $(X,\gamma)$ be an $m$-dimensional Gaussian space and $E\subset X$ of finite Gaussian perimeter. Then,
for $|D_\gamma\chi_E|$-a.e. $x\in X$ the rescaled sets $E_{x,t}$ in \eqref{blowngau} converge in $L^2(\gamma)$
to $S_{\nu_E}(x)$.
\end{proposition}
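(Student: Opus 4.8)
The plan is to transfer De~Giorgi's Euclidean blow-up theorem, recalled above, to the Gaussian rescaling by a direct change of variables, the only genuine differences being the eccentric center $e^{-t}x$ in place of $x$ and the replacement of local Lebesgue convergence by global convergence in $L^2(\gamma)$.

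First I would work on $\Rm$ through the canonical isomorphism, so that $\gamma=G_m\Leb{m}$ with $G_m$ the standard Gaussian density and $D_\gamma\chi_E=G_mD\chi_E$. Since $G_m$ is continuous and strictly positive, $|D_\gamma\chi_E|=G_m|D\chi_E|$ and the two total variation measures are mutually absolutely continuous; hence a property holding $|D\chi_E|$-a.e. holds $|D_\gamma\chi_E|$-a.e.\ and vice versa, and both are concentrated on the reduced boundary $\mathcal F E$, where $\nu_E(x)$ is defined and of unit length. By De~Giorgi's theorem, for $|D\chi_E|$-a.e.\ $x\in\mathcal F E$ the Euclidean blow-ups $(E-x)/r$ converge to $S_{\nu_E(x)}$ in $L^1_{\mathrm{loc}}(\Leb{m})$ as $r\downarrow 0$; I would fix such an $x$ and establish the asserted $L^2(\gamma)$ convergence for it.

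Next I would make the comparison between the two rescalings explicit. Setting $r:=\sqrt{1-e^{-2t}}$ and $v_t:=(1-e^{-t})x/r$, a one-line computation gives
$$
E_{x,t}=\frac{E-e^{-t}x}{r}=\frac{E-x}{r}+v_t,
$$
so $E_{x,t}$ is the translate by $v_t$ of the Euclidean blow-up $(E-x)/r$. Since $1-e^{-t}=O(t)$ and $r\sim\sqrt{2t}$, one has $v_t\to 0$ as $t\downarrow 0$, which quantifies the assertion in the text that the eccentricity has no effect in the limit; moreover $r$ is a continuous decreasing function of $t$ with $r\downarrow 0$, so the limit $t\downarrow 0$ is governed by the limit $r\downarrow 0$. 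Because the integrands are characteristic functions bounded by $1$ and $\gamma$ is a probability measure, $\|\chi_{E_{x,t}}-\chi_{S_{\nu_E(x)}}\|_{L^2(\gamma)}^2=\gamma\bigl(E_{x,t}\symdiff S_{\nu_E(x)}\bigr)$, so it suffices to show that this Gaussian measure of the symmetric difference tends to $0$.

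Finally I would deduce the global Gaussian convergence from the local Euclidean one using the tightness of $\gamma$. Writing $S:=S_{\nu_E(x)}$ and fixing $\eps>0$, I would choose $R$ with $\gamma(\Rm\setminus B_R)<\eps$ and split the symmetric difference accordingly: outside $B_R$ the contribution is at most $\eps$, while inside $B_R$ the bound $\gamma\le\|G_m\|_\infty\Leb{m}$ reduces matters to showing $\int_{B_R}|\chi_{E_{x,t}}-\chi_S|\,d\Leb{m}\to 0$. By the translation identity this last integral is controlled, for $|v_t|\le 1$, by $\int_{B_{R+1}}|\chi_{(E-x)/r}-\chi_S|\,d\Leb{m}$, which vanishes in the limit by De~Giorgi, plus the slab term $\Leb{m}\bigl((S\symdiff(S+v_t))\cap B_R\bigr)$, which vanishes because $S$ is a halfspace and $v_t\to 0$. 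Letting $\eps\downarrow 0$ then concludes. The only point requiring care is precisely this last step — transferring $L^1_{\mathrm{loc}}(\Leb{m})$ convergence to $L^2(\gamma)$ convergence — where one must simultaneously control the Gaussian tail, the uniform density bound on compact sets, and the vanishing translation $v_t$; everything else is a routine change of variables.
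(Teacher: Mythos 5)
Your proof is correct and takes essentially the same route as the paper, which handles this proposition by observing that the Gaussian rescaling is the Euclidean blow-up with eccentricity $x(e^{-t}-1)=O(t)=o(r)$ and then rewriting De~Giorgi's statement in Gaussian terms, deferring the details to \cite[Proposition~3.1]{ambfig10}. Your explicit identity $E_{x,t}=(E-x)/r+v_t$ with $v_t\to 0$, together with the tightness/local-density argument converting $L^1_{\mathrm{loc}}(\Leb{m})$ convergence into $L^2(\gamma)$ convergence, is exactly the formalization of that sketch.
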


This way, we easily obtain the finite-dimensional version of Theorem~\ref{main}.

As in \cite{ambfig10}, the following lemma (stated with the outer integral in order to avoid measurability
issues) plays a crucial role in the extension to infinite dimensions:

\begin{lemma}\label{lem:3.4}
Let $(X,\gamma)$ be a finite-dimensional Gaussian space, let
$(Y,{\cal F},\mu)$ be a probability space and, for $t>0$ and
$y\in Y$, let $g_{t,y}:X\to [0,1]$ be Borel maps. Assume also that:
\begin{itemize}
\item[(a)]$\{\sigma_y\}_{y\in Y}$ are
positive finite Borel measures in $X$, with
$\int_Y^*\sigma_y(X)\,d\mu(y)$ finite;
\item[(b)] $\sigma_y=G_m\Haus{m-1}\res\Gamma_y$ for $\mu$-a.e. $y$,
with $\Gamma_y$ countably $\Haus{m-1}$-rectifiable.
\end{itemize}
Then
\begin{equation}\label{hino7}
\limsup_{t\downarrow 0}\int_Y^*\int_X T_t
g_{t,y}(x)\,d\sigma_y(x)d\mu(y)\leq\limsup_{t\downarrow 0}
\frac{1}{\sqrt{t}}\int_Y^*\int_X g_{t,y}(x)\,d\gamma(x) d\mu(y).
\end{equation}
\end{lemma}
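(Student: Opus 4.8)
The plan is to pass to the dual semigroup and thereby convert the surface integral on the left into an integral against a measure that is absolutely continuous with respect to $\gamma$, reducing the whole problem to a one--sided bound on the density. By the definition of $T_t^*$ and the nonnegativity of $g_{t,y}$,
\[
\int_X T_t g_{t,y}\,d\sigma_y=\int_X g_{t,y}\,d(T_t^*\sigma_y),
\]
so it suffices to dominate $T_t^*\sigma_y$ by a multiple of $\gamma$. Writing $M_t(x,\cdot)$ for the Mehler kernel, i.e.\ the density of the law of $e^{-t}x+\sqrt{1-e^{-2t}}\,y$ under $\gamma$, Fubini shows that $T_t^*\sigma_y$ is absolutely continuous with respect to $\Leb m$ with density $w\mapsto\int_{\Gamma_y}M_t(x,w)\,G_m(x)\,d\Haus{m-1}(x)$; dividing by $G_m(w)$ yields the density $\rho_{t,y}$ of $T_t^*\sigma_y$ with respect to $\gamma$, and since $g_{t,y}\ge0$ any pointwise bound $\rho_{t,y}\le\Lambda_t$ gives $\int_X T_t g_{t,y}\,d\sigma_y\le\Lambda_t\int_X g_{t,y}\,d\gamma$.

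First I would settle the model case in which $\Gamma_y$ is a single affine hyperplane. By rotation invariance of the standard Gaussian I may take the normal to be $e_m$, and then the factorization $\gamma=\gamma'\otimes\gamma_1$, $\sigma_y=\gamma'\otimes\bigl(G_1(a)\delta_a\bigr)$ together with the tensorization $T_t=T_t'\otimes T_t^1$ reduces everything to one dimension. Because $T_t'^{\,*}\gamma'=\gamma'$, the density factors as $1\otimes\rho_t^{1,a}$, and an explicit computation of the one--dimensional Mehler convolution shows that the maximiser of $\rho_t^{1,a}$ sits at $w=e^t a$, where the competing exponentials cancel, giving the value $\sup_w\rho_t^{1,a}(w)=1/\sqrt{2\pi(1-e^{-2t})}$ \emph{independently of the offset} $a$. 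As $t\downarrow0$ this behaves like $\tfrac1{2\sqrt\pi}\,t^{-1/2}$, and since $\tfrac1{2\sqrt\pi}<1$ one has $1/\sqrt{2\pi(1-e^{-2t})}\le1/\sqrt t$ for all small $t$. Thus for hyperplanes the clean bound $\int_X T_t g\,d\sigma\le\tfrac1{\sqrt t}\int_X g\,d\gamma$ holds for small $t$ with a constant uniform over all hyperplanes.

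For a general countably $\Haus{m-1}$--rectifiable $\Gamma_y$ I would localize and blow up. Decomposing $\Gamma_y$ into $C^1$ graph pieces, at $\Haus{m-1}$--a.e.\ point the set has a unique approximate tangent plane of multiplicity one, and on the scale $r=\sqrt{1-e^{-2t}}\to0$ on which $M_t(x,\cdot)$ concentrates the surface is $C^1$--close to that tangent plane while $G_m$ is essentially frozen. I would therefore split $\Gamma_y=G_{t,y}\cup B_{t,y}$ into a \emph{good} part, where at scale $r$ the surface is a single $\eps$--flat sheet, and a \emph{bad} part $B_{t,y}$ where it is not yet flattened. On $G_{t,y}$ the area formula compares the density with the flat model up to factors $(1+\eps)$, so it still lies below $1/\sqrt t$ for small $t$; on $B_{t,y}$ I would simply use $T_tg\le1$. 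Integrating in $y$ against $\mu$ through the outer integral (monotone and subadditive, which is exactly why the statement is phrased with $\int_Y^*$, circumventing the measurability of $y\mapsto\int_X T_tg_{t,y}\,d\sigma_y$) yields
\[
\int_Y^*\!\int_X T_t g_{t,y}\,d\sigma_y\,d\mu\le\frac1{\sqrt t}\int_Y^*\!\int_X g_{t,y}\,d\gamma\,d\mu+\int_Y^*\sigma_y(B_{t,y})\,d\mu ,
\]
after which \eqref{hino7} follows on letting $t\downarrow0$, provided the last term vanishes.

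The hard part will be precisely this uniform control of the error in $y$. Rectifiability gives, for each fixed $y$, that $\sigma_y(B_{t,y})\to0$ as $t\downarrow0$, since $\Haus{m-1}\res\Gamma_y$ has density one and a unique tangent a.e., so the non--flattened part shrinks to a null set; the point is that by bounding the bad contribution with $T_tg\le1$ rather than through the blown--up density I avoid multiplying a small mass by $1/\sqrt t$. What must then be argued is that $\int_Y^*\sigma_y(B_{t,y})\,d\mu\to0$: here the hypothesis $\int_Y^*\sigma_y(X)\,d\mu<\infty$ provides the integrable dominating function $\sigma_y(X)$, and a dominated--convergence argument (in its outer--integral form) upgrades the pointwise decay to decay of the aggregate error. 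Choosing the flatness threshold defining $G_{t,y}$ as a function of $t$, and verifying the Severini--Egorov--type equi--flattening needed to make the good--part density bound stay below $1/\sqrt t$ simultaneously for the relevant $y$, is where the genuine work lies; the hyperplane computation itself is elementary.
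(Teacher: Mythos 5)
You take the same overall route as the paper, which itself only sketches this lemma and defers the details to \cite[Lemma~3.4]{ambfig10}: pass to the dual semigroup $T_t^*$, settle the flat case by rotation invariance and factorization down to the action of the one-dimensional $T_t^*$ on a weighted Dirac mass, and treat a general rectifiable $\Gamma_y$ by a good/bad splitting in which the bad part is charged only through its $\sigma_y$-mass and removed in the limit using the integrable majorant $\sigma_y(X)$ under the outer integral. Your one-dimensional computation is correct, including the key observation that $\sup_w\rho^{1,a}_t=(2\pi(1-e^{-2t}))^{-1/2}$ is independent of the offset $a$ (the weight $G_1(a)$ exactly cancels the Gaussian decay at the maximizer $w=e^{t}a$), which is precisely what makes the flat bound uniform over all hyperplanes, and matches the computation displayed in the paper.

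There is, however, a genuine gap in the passage from the flat model to the good part, and it is not among the difficulties you flag at the end (those concern uniformity in $y$, which is comparatively easy given the outer integral and the majorant $\sigma_y(X)$). The density of $T_t^*\bigl(\sigma_y\res G_{t,y}\bigr)$ at a point $w$ is \emph{not local}: every piece of $G_{t,y}$ contributes to it. So $\eps$-flatness and the single-sheet property of $\Gamma_y$ at the concentration scale $r=\sqrt{1-e^{-2t}}$ around each \emph{source} point do not justify the claim that this density is ``within a factor $(1+\eps)$ of the flat model''. Distinct good sheets are only forced to be mutually $r$-separated, and a layering/counting argument (sheets crossing $B(w,kr)$ number $O(k)$, with area $O((kr)^{m-1})$ each) shows their aggregate contribution at $w$ is bounded only by a \emph{dimensional} constant times $t^{-1/2}$ --- not by $(1+\eps)$ times the hyperplane value, and a dimensional constant is useless here since the lemma needs the asymptotic constant to be at most $1$. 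The repair, which is the real content of the proof beyond the one-dimensional computation, is to impose the single-$\eps$-flat-sheet condition at an enlarged scale $K\sqrt t$ (with $K$ fixed and large, sent to infinity after $t\downarrow0$, which still lets rectifiability force $\sigma_y(B_{t,y})\to0$), and then to prove that the contribution of all sheets other than the one through $w$'s neighborhood is $o_K(1)\cdot t^{-1/2}$, e.g.\ via the separation/counting bound $\sum_{k\geq K}k^{m}e^{-k^2/2}\to0$ or by splitting off the far-spread mass, which is at most $e^{-cK^2}\sigma_y(X)$ and can be handled like the bad set. Without this step the asserted pointwise comparison on $G_{t,y}$ is unjustified, and with the good set defined at scale $r$ alone it is actually false; once it is supplied, your Egorov/dominated-convergence argument in $y$ closes the proof exactly as in the paper's reference.
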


The proof, given in detail in \cite[Lemma~3.4]{ambfig10}, relies on the heuristic
idea that in an $m$-dimensional Gaussian space $(X,\gamma)$, for the adjoint semigroup $T_t^*$ (i.e. the one
acting on measures) we have
$$
\sqrt{t} T_t^*(G_m\Haus{m-1}\res\Gamma)\leq (1+o(1))\gamma
$$
whenever $\Gamma$ is a $C^1$ hypersurface. This is due to the fact in the case when $\Gamma$ is flat, i.e.
$\Gamma$ is an affine hyperplane, the asymptotic estimate above holds, and that for a non-flat surface only lower order terms
appear. In the flat case, using invariance under
rotation and factorization of the semigroup (see the next section) one is left to the estimate of $\sqrt{t}T_t^*\sigma$
when $X=\R$ and $\sigma$ is a Dirac mass. Then,
considering for instance $\sigma=\delta_0$, a simple computation gives
$$
\sqrt{t}T_t^*(G_m(0)\delta_0)=\frac{1}{2\pi}
\frac{\sqrt{t}}{\sqrt{1-e^{-2t}}}
e^{-|y|^2/(1-e^{-2t})}\Leb{1}\leq \frac{1}{2\sqrt{2\pi}}\gamma+o(1)
\quad\text{as $t\downarrow 0$.}
$$
(See the proof \cite[Lemma~3.4]{ambfig10} for more details.)



\subsection{Factorization of $T_t$ and $D_\gamma u$}

Let us consider the decomposition $X=F\oplus Y$, with $F\subset\tilde{H}$
finite-dimensional. Denoting by $T_t^F$ and $T_t^Y$ the OU
semigroups in $F$ and $Y$ respectively, it is easy to check (for
instance first on products of cylindrical functions on $F$ and $Y$,
and then by linearity and density) that also the action of $T_t$ can
be ``factorized'' in the coordinates $x=(z,y)\in F\times Y$ as
follows:
\begin{equation}\label{factorization}
T_tf(z,y)=T_t^Y\bigl(w\mapsto T_t^F f(\cdot,w)(z)\bigr)(y)
\end{equation}
for any bounded Borel function $f$.

Let us now discuss the factorization properties of $D_\gamma u$.
First of all, we can write $D_\gamma u=\nu_u|D_\gamma u|$ with $\nu_u:X\to H$ Borel
vectorfield satisfying $|\nu_u|_H=1$ $|D_\gamma u|$-a.e. Moreover, given a Borel set $B$, define
$$
B_y:=\left\{z\in F:\ (z,y)\in B\right\},\qquad B_z:=\left\{y\in Y:\
(z,y)\in B\right\}.
$$
The identity
\begin{equation}\label{hino440}
\int_B|\pi_F(\nu_u)|\,d|D_\gamma
u|=\int_Y|D_{\gamma_F}u(\cdot,y)|(B_y)\,d\gamma_Y(y)
\end{equation}
is proved in \cite[Theorem~4.2]{AMP} (see also \cite{AMMP,hin09set}
for analogous results), where $\pi_F:H\to F$ is the orthogonal
projection. Along the similar lines, one can also show the identity
\begin{equation}\label{hino441}
\int_B|\pi_{F^\perp}(\nu_u)|\,d|D_\gamma
u|=\int_F|D_{\gamma_Y}u(z,\cdot)|(B_z)\,d\gamma_F(z)
\end{equation}
with $\pi_F+\pi_{F^\perp}={\rm Id}$.
In the particular case $u=\chi_E$, with the notation
\begin{equation}\label{hino16}
E_y:=\left\{z\in F:\ (z,y)\in E\right\},\qquad E_z:=\left\{y\in Y:\
(z,y)\in E\right\}
\end{equation}
the identities \eqref{hino440} and \eqref{hino441} read respectively
as
\begin{equation}\label{hino40}
\int_B|\pi_F(\nu_E)|\,d|D_\gamma
\chi_E|=\int_Y|D_{\gamma_F}\chi_{E_y}|(B_y)\,d\gamma_Y(y)
\qquad\text{for all $B$ Borel,}
\end{equation}
\begin{equation}\label{hino41}
\int_B|\pi_{F^\perp}(\nu_E)|\,d|D_\gamma
\chi_E|=\int_F|D_{\gamma_Y}\chi_{E_z}|(B_z)\,d\gamma_F(z)
\qquad\text{for all $B$ Borel}
\end{equation}
with $D_\gamma\chi_E=\nu_E|D_\gamma\chi_E|$.

\begin{remark}\label{rtoomuch}{\rm Having in mind \eqref{hino40} and \eqref{hino41},
it is tempting to think that the formula holds for any orthogonal
decomposition of $H$ (so, not only when $F\subset\tilde{H}$), or even when none of the parts if
finite-dimensional. In order to avoid merely technical complications
we shall not treat this issue here because, in this more general
situation, the ``projection maps'' $x\mapsto y$ and $x\mapsto z$ are
no longer continuous. However, the problem can be solved removing sets of
small capacity, see for instance \cite{feydelpra} for a more
detailed discussion.}
\end{remark}

\subsection{Finite-codimension Hausdorff measures}

Following \cite{feydelpra}, we start by introducing pre-Hausdorff
measures which, roughly speaking, play the same role of the
pre-Hausdorff measures $\Haus{n}_\delta$ in the finite-dimensional
theory.

Let $F\subset\tilde{H}$ be a finite-dimensional subspace of dimension $m$,
and for $k\in \mathbb{N}$, $0 \leq k \leq m$, we define (with
the notation of the previous section)
\begin{equation}\label{feydel}
\Haus{\infty-k}_F(B):=\int_Y\int_{B_y}
G_m\,d\Haus{m-k}\,d\gamma_Y(y) \qquad\text{for all $B$ Borel,}
\end{equation}
where $G_m$ is the standard Gaussian density in
$F$ (so that $\Haus{\infty-0}_F=\gamma$). It is proved in
\cite{feydelpra} that $y\mapsto\int_{B_y} G_m\,d\Haus{m-k}$ is
$\gamma_Y$-measurable whenever $B$ is Suslin (so, in particular,
when $B$ is Borel), therefore the integral makes sense. The first
key monotonicity property noticed in \cite{feydelpra}, based on
\cite[2.10.27]{fed}, is
$$
\Haus{\infty-k}_F(B)\leq\Haus{\infty-k}_G(B)\qquad\text{whenever
$F\subset G\subset\tilde{H}$},
$$
provided $\Haus{m-k}$ in \eqref{feydel} is understood as the
\emph{spherical} Hausdorff measure of dimension $m-k$ in $F$. This
naturally leads to the definition
\begin{equation}\label{hino50}
\Haus{\infty-k}(B):=\sup_F\Haus{\infty-k}_F(B),\qquad\text{$B$ Borel,}
\end{equation}
where the supremum runs among all finite-dimensional subspaces $F$
of $\tilde H$. Notice however that, strictly speaking, the measure
defined in \eqref{hino50} does not coincide with the one in
\cite{feydelpra}, since all finite-dimensional subspaces of $H$ are
considered therein. We make the restriction to finite-dimensional
subspaces of $\tilde{H}$ for the reasons explained in
Remark~\ref{rtoomuch}. However, still $\Haus{\infty-k}$ is defined
in a coordinate-free fashion.

These measures have been related for the first time to the perimeter
measure $D_\gamma\chi_E$ in \cite{hin09set}. Hino defined the
$F$-essential boundaries (obtained collecting the essential
boundaries of the finite-dimensional sections $E_y\subset F\times \{y \}$)
\begin{equation}\label{hino17}
\partial_F^*E:=\left\{(z,y):\ z\in\partial^* E_y\right\}
\end{equation}
and noticed another key monotonicity property (see also
\cite[Theorem~5.2]{AMP})
\begin{equation}\label{hino19}
\Haus{\infty-1}_F(\partial^*_FE\setminus\partial^*_GE)=0
\qquad\text{whenever $F\subset G\subset \tilde H$.}
\end{equation}
Then, choosing a sequence ${\cal F}=\{F_1,F_2,\ldots\}$ of
finite-dimensional subspaces of $\tilde{H}$ whose union is dense he
defined
\begin{equation}\label{hino70}
\Haus{\infty-1}_{{\cal F}}:=\sup_n\Haus{\infty-1}_{F_n},\qquad
\partial_{{\cal F}}^*E:=\liminf_{n\to\infty}\partial^*_{F_n}E,
\end{equation}
and showed that
\begin{equation}\label{hino60}
|D_\gamma\chi_E|=\Haus{\infty-1}_{{\cal F}}\res\partial_{{\cal
F}}^*E.
\end{equation}

In order to prove our main result we will follow Hino's procedure, but working with the reduced boundaries
in place of the essential boundaries.

\subsection{Halfspaces}\label{sec:halfspaces}
Let $h\in H$ and $\hat h$ be its corresponding element in $L^2(X,\gamma)$. Then
there exist a linear subspace $X_0\subset X$ such that $\gamma(X\setminus
X_0)=0$ and a representative of $\hat h$ which is 
linear in $X_0$. Indeed, let $h_n\to h$ in $L^2(X,\gamma)$ with $\hat h_n\in X^*$. It is
not restrictive to assume that $\hat h_n \to\hat h$ $\gamma$-a.e. in $X$, so if we define
$$
X_0:=\left\{x\in X:\ \text{$\hat{h}_n(x)$ is a Cauchy sequence}\right\}
$$
we find that $X_0$ is a vector space of full $\gamma$-measure and that the pointwise limit
of $\hat{h}_n$ provides a version of $h$, linear in $X_0$.

Having this fact in mind, it is natural to define halfspaces in the following
way.
\begin{definition}
  Given a unit vector $h\in H$ we shall denote by $S_h$ the halfspace having $h$
  as ``inner normal'', namely
  \begin{equation}\label{def:Sh}
    S_h:=\left\{x\in X:\ \hat{h}(x)>0\right\}.
  \end{equation}
\end{definition}

\begin{proposition} \label{prop:basichalfspace}
For any $S_h$ halfspace it holds $\gamma(S_h)=1/2$, $P(S_h)=\sqrt{1/(2\pi)}$,
and $D\chi_{S_h}=h|D\chi_{S_h}|$.
Furthermore, the 
following implication holds:
$$
\lim_{n\to\infty}|h_n-h|=0\qquad\Longrightarrow\qquad \lim_{n\to\infty}\chi_{S_{h_n}}=\chi_{S_h}.
$$
\end{proposition}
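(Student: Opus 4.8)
The plan is to prove the four assertions in the order dictated by their logical dependencies, treating the continuity statement first since it feeds the computation of the perimeter. The value $\gamma(S_h)=1/2$ is immediate: writing $\hat h=\lim_n\hat h_n$ in $L^2(X,\gamma)$ with $\hat h_n\in X^*$, each $\hat h_n$ is a centered Gaussian random variable, so the $L^2$-limit $\hat h$ is itself Gaussian; since $\gamma$ is centered and $h\mapsto\hat h$ is an isometry with $|h|_H=1$, the law of $\hat h$ under $\gamma$ is the standard Gaussian $N(0,1)$. In particular $\hat h$ has no atom at $0$ and $\gamma(\hat h>0)=1/2$.

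For the continuity statement, given unit vectors $h_n\to h$ in $H$ the isometry yields $\hat h_n\to\hat h$ in $L^2(X,\gamma)$, hence in $\gamma$-measure. One checks the inclusion $S_{h_n}\symdiff S_h\subset\{|\hat h|\le\delta\}\cup\{|\hat h-\hat h_n|>\delta\}$ for every $\delta>0$: if $x\in S_h\setminus S_{h_n}$ then $0<\hat h(x)\le|\hat h(x)-\hat h_n(x)|$, so either $\hat h(x)\le\delta$ or $|\hat h-\hat h_n|(x)>\delta$, and symmetrically on $S_{h_n}\setminus S_h$. Therefore $\gamma(S_{h_n}\symdiff S_h)\le\gamma(|\hat h|\le\delta)+\gamma(|\hat h-\hat h_n|>\delta)$, where the second term tends to $0$ by Chebyshev (convergence in measure) and the first is bounded by $2\delta\,G_1(0)$ thanks to the explicit $N(0,1)$ density. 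Letting $n\to\infty$ and then $\delta\downarrow0$ gives $\gamma(S_{h_n}\symdiff S_h)\to0$, i.e. $\chi_{S_{h_n}}\to\chi_{S_h}$ in $L^1(\gamma)$.

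Next I would prove $P(S_h)=\sqrt{1/(2\pi)}$ and $D\chi_{S_h}=h|D\chi_{S_h}|$ first for $h\in\tilde H$, where the factorization $\gamma=\gamma_F\otimes\gamma_Y$ with $F=\mathrm{span}(h)$ is available. In coordinates $(z,y)\in F\times Y$ one has $\hat h(x)=z$, so $S_h=\{z>0\}$ is a cylinder whose section $(S_h)_y$ equals the half-line $(0,\infty)\subset F\cong\R$ for every $y$. A one-dimensional integration by parts against $G_1$ gives $D_{\gamma_F}\chi_{(0,\infty)}=G_1(0)\,\delta_0=\tfrac{1}{\sqrt{2\pi}}\delta_0$, a positive measure pointing in the $+h$ direction. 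Since the set does not depend on $y$, formula \eqref{hino41} forces the $F^\perp$-component of $\nu_{S_h}$ to vanish, while \eqref{hino40} with $B=X$ integrates the one-dimensional mass to $|D_\gamma\chi_{S_h}|(X)=\int_Y G_1(0)\,d\gamma_Y=1/\sqrt{2\pi}$; together these give $\nu_{S_h}=h$ and $P(S_h)=1/\sqrt{2\pi}$.

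Finally, for a general unit $h\in H$ I would pass to the limit along $\tilde H\ni h_n\to h$. Lower semicontinuity of the Gaussian total variation under the $L^1(\gamma)$-convergence established above yields $\chi_{S_h}\in BV(X,\gamma)$ with $P(S_h)\le\liminf_n P(S_{h_n})=1/\sqrt{2\pi}$. For the matching lower bound and the direction I would test the defining $BV$ identity with $\phi\equiv1$ (legitimate by approximating $1$ with cylindrical functions whose $\partial_h$-derivatives vanish in $L^1(\gamma)$), obtaining the global identity $\langle D_\gamma\chi_{S_h},h\rangle_H(X)=\int_{S_h}\hat h\,d\gamma=\int_0^\infty z\,G_1(z)\,dz=1/\sqrt{2\pi}$. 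Since $\langle D_\gamma\chi_{S_h},h\rangle_H(X)=\int_X\langle\nu_{S_h},h\rangle_H\,d|D_\gamma\chi_{S_h}|\le|D_\gamma\chi_{S_h}|(X)=P(S_h)$ by Cauchy--Schwarz and $|h|_H=1$, we get $P(S_h)\ge1/\sqrt{2\pi}$, hence equality throughout; equality in Cauchy--Schwarz then forces $\langle\nu_{S_h},h\rangle_H=1$, i.e. $\nu_{S_h}=h$ $|D_\gamma\chi_{S_h}|$-a.e., which is $D\chi_{S_h}=h|D\chi_{S_h}|$. The main obstacle is precisely that for $h\notin\tilde H$ the factorization is unavailable, since the projection onto $\mathrm{span}(h)$ is not continuous (cf. Remark~\ref{rtoomuch}); this is why the general case is reached through approximation, lower semicontinuity, and the global ($\phi\equiv1$) integration-by-parts identity rather than by a direct computation.
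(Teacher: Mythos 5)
Your proof is correct and shares the paper's overall architecture (continuity of $h\mapsto\chi_{S_h}$ via essentially the same inclusion into $\{|\hat h-\hat h_n|>\eps\}\cup\{|\hat h|\leq\eps\}$, an explicit computation for ``nice'' $h$, then approximation), but it diverges in two instructive ways. First, for the explicit case the paper works in $X=H=\R^n$, takes the classical finite-dimensional identity $D\chi_{S_h}=h|D\chi_{S_h}|$ for granted, and only computes the total mass via the $\phi\equiv 1$ integration by parts; you instead take $h\in\tilde H$, factor $\gamma=\gamma_F\otimes\gamma_Y$ with $F=\mathrm{span}(h)$, and read everything off the one-dimensional computation through \eqref{hino40}--\eqref{hino41}. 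Be aware of a small circularity there: \eqref{hino40}--\eqref{hino41} are stated for $u\in BV(X,\gamma)$, so you should first verify that the cylinder $S_h$ is $BV$ (immediate by Fubini: the candidate measure $G_1(0)\,h\,(\delta_0\otimes\gamma_Y)$ satisfies the defining identity for every direction $k\in H$), and the sign $\nu_{S_h}=+h$ really comes from the vector identity $\pi_F(D_\gamma\chi_{S_h})=D_{\gamma_F}\chi_{(0,\infty)}\,\gamma_Y$ rather than from the scalar formulas alone. Second --- and this is the genuine added value --- where the paper dismisses the general $h\in H$ with ``by a standard cylindrical approximation'', you supply the actual mechanism: lower semicontinuity of Gaussian perimeter under the $L^1(\gamma)$ convergence furnished by the first part gives $P(S_h)\leq 1/\sqrt{2\pi}$, while the $\phi\equiv 1$ identity $\langle D_\gamma\chi_{S_h},h\rangle_H(X)=\int_{S_h}\hat h\,d\gamma=1/\sqrt{2\pi}$ combined with Cauchy--Schwarz gives the matching lower bound, and equality in Cauchy--Schwarz pins down $\nu_{S_h}=h$ in one stroke. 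This closes the step the paper leaves implicit. One cosmetic point: $\phi\equiv 1$ is itself an admissible cylindrical test function, so no approximation of the constant is needed.
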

\begin{proof} Let us first show that convergence of $h_n$ to $h$ implies convergence of the corresponding halfspaces.
Since for all $\eps>0$ it holds
$$
\{\hat{h}_n>0\}\setminus\{\hat{h}>0\}\subset
\bigl(\{\hat{h}_n>0\}\setminus\{\hat{h}>-\eps\}\bigr)\cup\{\hat{h}\in (-\eps,0)\}\subset
\{|\hat{h}_n-\hat{h}|>\eps\}\cup\{\hat{h}\in (-\eps,0)\}
$$
and since the convergence of $\hat{h}_n$ to $\hat{h}$ in $L^2(X,\gamma)$ implies
$\gamma(\{|\hat{h}_n-\hat{h}|>\eps\})\to 0$ we obtain
$$
\limsup_{n\to\infty}\gamma(\{\hat{h}_n>0\}\setminus\{\hat{h}>0\})\leq\gamma(\hat{h}^{-1}(-\eps,0)).
$$
Now, since $\hat{h}$ has a standard Gaussian law and $\eps$ is arbitrary it follows that $\gamma(\{\hat{h}_n>0\}\setminus\{\hat{h}>0\})\to 0$.
A similar argument (because the laws of all $\hat{h}_n$ are standard Gaussian) yields $\gamma(\{\hat{h}>0\}\setminus\{\hat{h}_n>0\})\to 0$.

Now, if $\gamma$ is the standard Gaussian in $X=H=\R^n$ and $S_h$ is a halfspace, it is immediate to check that $\gamma(S_h)=1/2$. In addition, since
$D_\gamma \chi_{S_h}=h|D_\gamma \chi_{S_h}|$ and $\hat{h}(x)=\langle h,x\rangle$, we can use $E=S_h$ and $\phi\equiv 1$ in 
the integration by parts formula
$$
\int_{E}\partial_h\phi\, d\gamma+\int_X\phi\,d\langle h, D_\gamma\chi_E\rangle=\int_{E}\hat{h}\,d\gamma
$$
to get $|D_\gamma S_h|(X)=\int_{S_h}\langle h,x\rangle\,dx=\sqrt{1/(2\pi)}$. By a standard cylindrical approximation we obtain that
$\gamma(S_h)=\tfrac12$, $S_h$ has finite perimeter, and $D\chi_{S_h}=h|D\chi_{S_h}|$ in the general case.
\end{proof}

\subsection{Convergence to halfspaces}\label{sec:conv-halfspaces}

In this section we prove Theorem~\ref{main}. 
We consider an increasing family of subspaces $F_n\subset\tilde H$ and, for any $n$, we consider the corresponding decomposition
$x=(x_1,x_2)$ with $x_1\in F_n$ and $x_2\in Y_n$. Denote by $\gamma=\gamma_n\times\gamma_n^\perp$ the corresponding factorization of
$\gamma$. Then, adapting the definition of boundary given in Hino's work \cite{hin09set} (with reduced in
place of essential boundary) we define
$$
\mathcal F_HE:=\liminf_{n\to\infty}B_n\qquad\text{where}\qquad
B_n=\left\{x=(x_1,x_2):\ x_1\in\mathcal FE_{x_2}\right\}
$$
(recall that $E_{x_2}=\left\{x_1\in F_n:\
(x_1,x_2)\in E\right\}$).
We also set $C_n=\cap_{m\geq n}B_m$, so that $C_n\uparrow\mathcal F_HE$ as $n\to\infty$. Recall that
by \eqref{hino440} the measure $\sigma_n:=|\pi_{F_n}(\nu_E)||D_\gamma\chi_E|$ is concentrated on $B_n$, 
because by De~Giorgi's theorem the derivative of finite-dimensional sets of finite perimeter is concentrated on the reduced boundary.
Since $\sigma_n$ is nondecreasing with respect to $n$, $\sigma_n$ is concentrated on all sets $B_m$ with $m\geq n$,
and therefore on $C_n$. It follows that $|D_\gamma\chi_E|=\sup_n\sigma_n$ is concentrated on $\mathcal F_HE$, one of the basic observations in
\cite{hin09set}. 

Let us denote by $\nu_n(x)=\nu_n(x_1,x_2)$ the approximate unit normal to $E_{x_2}^n$ at $x_1$.
Notice that, in this way, $\nu_n$ is
pointwise defined at all points $x\in B_n$ and $D_{\gamma_n}\chi_{E_{x_2}}=\nu_n(x)|D_{\gamma_n}\chi_{E_{x_2}}|$
(again by De~Giorgi's finite-dimensional result). Since the identity (an easy consequence of Fubini's theorem)
$$
\pi_F(D_\gamma\chi_E)=D_{\gamma_n}\chi_{E_{x_2}}\gamma^\perp_n
$$
and the definition of $\nu_n$ give
$$
\pi_{F_n}(\nu_E)|D_\gamma\chi_E|=D_{\gamma_n}\chi_{E_{x_2}}\gamma^\perp_n=\nu_n|D_{\gamma_n} \chi_{E_{x_2}}|\gamma^\perp_n
$$ 
we can use \eqref{hino440} once more to get
$$
\pi_{F_n}(\nu_E)|D_\gamma\chi_E|=\nu_n|\pi_{F_n}(\nu_E)||D_\gamma\chi_E|,
$$
so that $\nu_n=\pi_{F_n}(\nu_E))/|\pi_{F_n}(\nu_E)|$ $\sigma_n$-a.e. in $X$. Since  $\sigma_n\uparrow
|D_\gamma\chi_E|$ as $n\to\infty$, it follows that on each set $C_n$ the function $\nu_m$ is defined for $m\geq n$, and converges
to $\nu_E$  as $m\to\infty$ $|D_\gamma\chi_E|$-a.e. on $C_n$. Then, Proposition \ref{prop:basichalfspace} and the convergence of $\nu_n$ give
\begin{equation}\label{eq:error1}
\lim_{n\to\infty}\int_X\int_X|\chi_{S_{\nu_n}}-\chi_{S_{\nu_E}}|\,d\gamma\,d\sigma_n=0.
\end{equation}
In addition, by the finite-dimensional result of convergence to half spaces, we get
\begin{equation}
\lim_{t\downarrow 0}\int_X\int_{F_n}\left|\chi_{E_{x_2}}(e^{-t}x_1+\sqrt{1-e^{-2t}}x_1')-\chi_{\tilde S_{\nu_n(x)}}(x_1')\right|\,d\gamma_n(x_1')\,d\sigma_n(x)=0,
\end{equation}
where $\tilde S_{\nu_n}$ is the projection of $S_{\nu_n}$ on $F_n$. Now, notice that $S_{\nu_n}=\tilde S_{\nu_n}\times Y_n$, since
$\nu_n\in F$. This observation, in combination with \eqref{eq:error1}, gives that
$$
\limsup_{t\downarrow 0}\int_X\int_X\left|\chi_{E_{x_2}}(e^{-t}x_1+\sqrt{1-e^{-2t}}x_1')-\chi_{S_{\nu_E(x)}}(x')\right|\,d\gamma(x')\,d\sigma_n(x)
$$
is infinitesimal as $n\to\infty$. Therefore to prove \eqref{eq:main} it suffices to show that
\begin{equation}\label{eq:errorefinale}
\limsup_{t\downarrow 0}\int_X\int_X\left|\chi_{E_{x_2}}(e^{-t}x_1+\sqrt{1-e^{-2t}}x_1')-\chi_E(e^{-t}x+\sqrt{1-e^{-2t}}x')\right|
\,d\gamma(x')\,d\sigma_n(x)
\end{equation}
is infinitesimal as $n\to\infty$. 
  
In order to show this last fact, using again $\sigma_n=|D_{\gamma_n}\chi_{E_{x_2}}|\gamma^\perp_n$, 
we can write the expression as 
$$
\limsup_{t\downarrow 0}\int_{Y_n}\int_{F_n}T^{F_n}_t g_t(x_1,x_2)\,d|D_{\gamma_n}\chi_{E_{x_2}}|(x_1)\,d\gamma^\perp_n(x_2)
$$
with $g_t(x_1,x_2):=\int_{Y_n}\left|\chi_E(x_1,x_2')-\chi_E(x_1,e^{-t}x_2+\sqrt{1-e^{-2t}}x_2')\right|\,d\gamma_n^\perp(x_2')$. 
As in \cite{ambfig10} we now use Lemma~\ref{lem:3.4} and the rectifiability of the measures $|D_{\gamma_n}\chi_{E_{x_2}}|$
to bound the limsup above by
\begin{equation}\label{eq:errorefinale1}
\limsup_{t\downarrow 0}\int_{Y_n}\int_{F_n}\frac{g_t(x_1,x_2)}{\sqrt{t}}\,d\gamma_n(x_1)\,d\gamma^\perp_n(x_2).
\end{equation}
Now we integrate w.r.t. $\gamma_n$ the inequality (ensured by \eqref{poincarestr})
$$
\int_{Y_n}g_t(x_1,x_2)\,d\gamma_n^\perp(x_2)\leq c\sqrt{t}|D_{\gamma^\perp_n}\chi_{E_{x_1}}|(Y_n),
$$
valid for all $x_1$ such that $E_{x_1}$ has finite perimeter in $(Y_n,\gamma^\perp_n)$,
to bound the $\limsup$ in \eqref{eq:errorefinale1} by
$$c\int_{F_n}|D_{\gamma^\perp_n}\chi_{E_{x_1}}|(Y_n)\,d\gamma_n(x_1)=c\int_X|\pi_{F_n}^\perp(\nu_E)|\,d|D_\gamma\chi_E|.$$
Since $|\pi_{F_n}^\perp\nu_E|\downarrow 0$ as $n\to\infty$, this concludes the proof.

\bibliographystyle{plain}

\begin{thebibliography}{99}

     
\bibitem{AMMP}
     \newblock L.  Ambrosio, S. Maniglia, M.  Miranda Jr. and D.  Pallara,
     \newblock \emph{$BV$ functions in abstract Wiener spaces},
     \newblock J. Funct. Anal., \textbf{258} (2010), 785--813.

\bibitem{AMP}
     \newblock L. Ambrosio, M. Miranda and D. Pallara,
     \newblock \emph{Sets with finite perimeter in Wiener spaces,
     perimeter measure and boundary rectifiability},
     \newblock Discrete Contin. Dyn. Syst. Series A, \textbf{28} (2010), 591--606.

\bibitem{ambfig10}
L.~Ambrosio and A.~Figalli.
\newblock\emph{Surface measures and convergence of the Ornstein-Uhlenbeck semigroup}
\newblock Ann. Fac. Sci. Toulouse, \textbf{20} (2011), 407--438.

\bibitem{boga}
     \newblock V.I. Bogachev,
     \newblock ``Gaussian Measures'',
     \newblock American Mathematical Society, 1998.

\bibitem{DeG0}
     \newblock E. De~Giorgi,
     \newblock \emph{Definizione ed espressione analitica del perimetro di un insieme},
     \newblock Atti Accad. Naz. Lincei Cl. Sci. Fis. Mat. Nat., \textbf{8} (1953), 390--393.

\bibitem{DeG1}
     \newblock E. De~Giorgi,
     \newblock \emph{Su una teoria generale della misura {$(r-1)$}-dimensionale in uno
                     spazio ad {$r$} dimensioni},
     \newblock Ann. Mat. Pura Appl., \textbf{4} (1954), 191--213.

\bibitem{fed}
     \newblock H. Federer,
     \newblock ``Geometric measure theory'',
     \newblock Springer, 1969.

\bibitem{feydelpra}
     \newblock D. Feyel and A. De la Pradelle,
     \newblock \emph{Hausdorff measures on the Wiener space},
     \newblock Potential Anal., \textbf{1} (1992), 177--189.

\bibitem{fuk99}
     \newblock M. Fukushima,
     \newblock \emph{On semimartingale characterization of functionals of symmetric Markov processes},
     \newblock Electron J. Probab., \textbf{4} (1999), 1--32.

\bibitem{fuk2000_1}
     \newblock M. Fukushima,
     \newblock \emph{$BV$ functions and distorted Ornstein-Uhlenbeck processes over the abstract Wiener space},
     \newblock J. Funct. Anal., \textbf{174} (2000), 227--249.

\bibitem{fuk2000_2}
     \newblock M. Fukushima and M. Hino,
     \newblock \emph{On the space of $BV$ functions and a related stochastic calculus in infinite dimensions},
     \newblock J. Funct. Anal., \textbf{183} (2001), 245--268.

\bibitem{hin09set}
     \newblock M. Hino,
     \newblock \emph{Sets of finite perimeter and the Hausdorff--Gauss measure on the Wiener space},
     \newblock J. Funct. Anal., \textbf{258} (2010), 1656--1681.


\end{thebibliography}

\end{document}